\definecolor{RedClr}{rgb}{1,0,0}
\definecolor{BlueClr}{rgb}{0,0,1}
\definecolor{TextColor}{rgb}{0,0,0.5}
\definecolor{Violet}{rgb}{0.5,0,1}
\definecolor{Bordeaux}{rgb}{1,0.3,0.4}
\begin{document}

\newtheorem{thm}{Theorem}[section]
\newtheorem{cor}[thm]{Corollary}
\newtheorem{lem}[thm]{Lemma}
\newtheorem{prop}[thm]{Proposition}
\theoremstyle{definition}
\newtheorem{defn}[thm]{Definition}
\theoremstyle{remark}
\newtheorem{rem}[thm]{Remark}
\numberwithin{equation}{section} \theoremstyle{quest}
\newtheorem{quest}[]{Question}
\numberwithin{equation}{section} \theoremstyle{prob}
\newtheorem{prob}[]{Problem}
\numberwithin{equation}{section} \theoremstyle{answer}
\newtheorem{answer}[]{Answer}
\numberwithin{equation}{section}
\theoremstyle{fact}
\newtheorem{fact}[thm]{Fact}
\numberwithin{equation}{section}
\theoremstyle{facts}
\newtheorem{facts}[thm]{Facts}
\numberwithin{equation}{section}
\newenvironment{prf}{\noindent{\bf Proof}}{\\ \hspace*{\fill}$\Box$ \par}
\newenvironment{skprf}{\noindent{\\ \bf Sketch of Proof}}{\\ \hspace*{\fill}$\Box$ \par}

\title[]{On the properties of the combinatorial Ricci flow for surfaces}
\author{Emil Saucan}

\address{Department of Mathematics, Technion, Haifa, Israel}%
\email{semil@tx.technion.ac.il}%

\thanks{Research supported by 
by European Research Council under the European Community's Seventh Framework Programme
(FP7/2007-2013) / ERC grant agreement n${\rm ^o}$ [203134].}%
\subjclass{Primary: 53C44, 52C26, 68U05; Secondary: 65D18, 51K10, 57R40.}%
\keywords{Combinatorial surface Ricci flow, metric curvature}%

\date{\today}

\begin{abstract}
We investigate the properties of the combinatorial Ricci flow for surfaces, both forward and backward -- existence, uniqueness and singularities formation. We show that the positive results that exist for the smooth Ricci flow also hold for the combinatorial one and that, moreover, the same results hold for a more general, metric notion of curvature. Furthermore, using the metric curvature approach, we show the existence of the Ricci flow for polyhedral manifolds of piecewise constant curvature. We also study the problem of the realizability of the said flow in $\mathbb{R}^3$.
\end{abstract}

\maketitle


\section{Introduction and first results}

We consider the combinatorial Ricci flow introduced by Chow and Luo \cite{CL}, but we approach it not directly from its definition, but rather via the ``classical'' Ricci flow (i.e. the Ricci flow for smooth surfaces). While this approach is certainly less elegant, it allows us to obtain, by making appeal to the power of the classical theory (see \cite{Ha}, \cite{To}, \cite{To1}), a number of results that appear not to have been proved previously: existence of the reverse flow, uniqueness, singularities formation and the issue of embeddability (realizability) in $\mathbb{R}^3$.
Such results are not less important from the viewpoint of the applications of the combinatorial flow -- see, e.g. \cite{Gu-Yau}, \cite{JKG}.

We consider first compact surfaces without boundary and we concentrate mainly 
on polyhedral objects, since these are those arise more naturally and also that are of most interest in applications. We shall also show 
that by considering a more general notion of {\it metric curvature}, we easily obtain
an extension of these results to a larger class of geometric objects (and, in particular, to $CW$ complexes, not necessarily regular).\footnote{The class of $CW$ complexes for which this generalization holds is larger than the one considered in \cite{CL}, since we do not have to restrict ourselves to complexes such that each to cell has 3 vertices and each vertex has degree 2.}

We start with the following basic fact: Given a polyhedral surface $S^2_{Pol}$, we can find a smooth surface $S^2_{s}$ arbitrarily close to it, both in the Hausdorff metric and as far as Gaussian curvature is concerned. More precisely, we have the following result due to Brehm and  K\"{u}hnel \cite{BK}

\begin{prop}[Brehm and  K\"{u}hnel \cite{BK}, Proposition 1] \label{prop:BKapprox}
Let $S^2_{Pol}$ be a compact polyhedral surface without boundary. Then there exists a sequence $\{S^2_m\}_{m \in \mathbb{N}}$ of smooth surfaces, (homeomorphic to $S^2_{Pol}$), such that it assures good approximation of $S^2_{Pol}$ in the
\begin{enumerate}
\item {\em metric sense}, that is
\begin{enumerate}
\item $S^2_m = S^2_{Pol}$ outside the $\frac{1}{m}$-neighbourhood of the 1-skeleton of $S^2_{Pol}$,
\item The sequence $\{S^2_m\}_{m \in \mathbb{N}}$ converges to 
$S^2_{Pol}$ in the Hausdorff metric;
\end{enumerate}
\item {\em curvature sense}, more precisely the (combinatorial) curvatures of $S^2_m$ weakly converge to $S^2_{Pol}$ in the sense of measures. Here the curvature measure in the smooth case is the area measure weighted by Gauss curvature (considered as a function) and in the polyhedral case it is the Dirac measure at the vertices weighted by) the combinatorial curvature.
\end{enumerate}
\end{prop}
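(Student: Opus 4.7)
The plan is to construct $S^2_m$ by an explicit two-scale surgery on $S^2_{Pol}$: smooth a tubular neighborhood of each open edge using a cylindrical patch, then smooth a neighborhood of each vertex using a spherical-cap-type patch. Since $S^2_{Pol}$ is already smooth (in fact locally flat) away from its $1$-skeleton, keeping the surface unchanged outside the $(1/m)$-neighborhood of the $1$-skeleton immediately gives (1)(a), and Hausdorff convergence in (1)(b) follows because every point of $S^2_m$ that differs from $S^2_{Pol}$ lies within a shrinking tubular neighborhood of the $1$-skeleton.

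For $m$ large enough that the $(1/m)$-neighborhoods of distinct edges meet only inside vertex neighborhoods, I would proceed along each open edge $e$ as follows. Locally $S^2_{Pol}$ is isometric to two half-planes meeting at a fixed dihedral angle; I would replace a strip of width $O(1/m)$ on each adjacent face by the corresponding cylindrical arc so that the two faces are joined tangentially along curves parallel to $e$. Crucially, cylinders are developable, so the smoothed collar carries no Gaussian curvature, matching the zero curvature measure of $S^2_{Pol}$ along the interiors of edges.

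At each vertex $v$ with cone angle $\theta_v$ and combinatorial curvature $K_v = 2\pi - \theta_v$, I would excise a topological disk of diameter $O(1/m)$ around $v$ whose boundary consists of the arcs left by the cylindrical smoothings of the incident edges, and glue in a smooth cap realising total Gaussian curvature exactly $K_v$ (for instance a suitably shaped surface of revolution, or a radially mollified cone, with boundary tangent to the cylindrical collars; Gauss--Bonnet fixes the geodesic curvature of the gluing curve once the cap is chosen). With sufficient care this yields $S^2_m$ of class $C^\infty$ and homeomorphic to $S^2_{Pol}$.

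Weak convergence of the curvature measures in (2) then follows from the structure of the construction. For any continuous test function $\phi$, contributions to $\int \phi\, K_m\, dA_m$ come only from the vertex caps, each cap carries total mass exactly $K_v$, and the support of that mass lies in a disk of radius $O(1/m)$ around $v$. Consequently
\[
\int_{S^2_m} \phi\, K_m\, dA_m \;=\; \sum_{v} \phi(v)\, K_v \;+\; O(1/m) \;\longrightarrow\; \int_{S^2_{Pol}} \phi\, d\mu_{Pol}.
\]
The principal obstacle is the third step: smoothly interpolating between several cylindrical collars (one per incident edge, with different radii and axes) and a cap of prescribed total Gaussian curvature, while keeping the result genuinely $C^\infty$. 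This is delicate rather than deep, and it is essentially the content of the construction in \cite{BK}.
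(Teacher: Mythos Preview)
Your argument is essentially correct and reproduces, almost step for step, the original Brehm--K\"uhnel construction that the paper summarizes just after stating the proposition: excise disjoint disk neighbourhoods of the vertices, replace neighbourhoods of the open edges by flat cylinders, and cap each vertex neighbourhood by a smooth disk whose boundary is a geodesic polygon with the exterior-angle sum dictated by the face angles at that vertex (so Gauss--Bonnet forces the total curvature of the cap to equal the angular defect). Your honest flag about the delicate interpolation at the vertices is exactly where the work in \cite{BK} lies. One small point to tighten: your $O(1/m)$ error in the weak-convergence step tacitly uses that the \emph{total variation} $\int_{\text{cap}}|K_m|\,dA_m$, not just the net mass, is uniformly bounded; this is true for the caps one actually builds but deserves a word.

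The paper, however, does not rest on this construction. Its own proof is genuinely different: rather than assembling $S^2_m$ from standard pieces, it takes Munkres-type \emph{smoothings} of the polyhedral surface and invokes the fact that such smoothings are $\delta$-approximations (hence, for $\delta$ small, also $\alpha$-approximations), so distances and angles---and therefore defects---are approximated arbitrarily well; convergence of curvature measures is then imported from \cite{CMS}, and Hausdorff convergence from Gromov's framework. What each approach buys: your (i.e.\ Brehm--K\"uhnel's) surgery is explicit, gives exact control of where the curvature sits, and needs only elementary differential geometry plus Gauss--Bonnet; the paper's smoothing argument is less constructive but requires no embedding in $\mathbb{R}^3$, makes no handcrafted modification near the $1$-skeleton, and extends verbatim to more general $CW$ (not merely polyhedral) surfaces---which is precisely why the author introduces it.
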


Recall that combinatorial Gauss curvature for polyhedral surfaces is defined by the {\it angular defect}, that is:
\begin{equation}
K(p) = 2\pi - \sum_{i=1}^{m_p}\alpha_i(p)\,
\end{equation}
where $\alpha_1,\ldots,\alpha_{m_p}$ are the (interior) face angles adjacent to the vertex $v_i$.

The combinatorial Ricci flow is then given as
\begin{equation} \label{eq:RicciComb}
\frac{dr_i}{dt} = - 2K_ir_i\,,
\end{equation}
in straight forward analogy with the classical flow
\begin{equation} \label{eq:RicciSmooth}
\frac{dg_{ij}(t)}{dt} = - 2K(t)g_{ij}(t)\,.
\end{equation}

(In using proper, rather than partial derivatives notation, we follow \cite{CL}.)

We also consider the close relative of (\ref{eq:RicciSmooth}), and its combinatorial version, namely in the normalized flow
\begin{equation} \label{eq:RicciFlow+}
\frac{dg_{ij}(t)}{dt} = - 2(K - K(t))g_{ij}(t)\,,
\end{equation}
where $K$ is the average sectional (Gauss) curvature of the initial 
surface $S_0$, $K = \int_{S_0}K(t)dA\big/\int_{S_0}dA$.
\marginpar{\tiny \bf }

Here, given the (Euclidean) triangulation with edges $e_{ij}$, of lengths $l_{ij}$, the numbers $r_i$ satisfy the following relation:
 \begin{equation}
l_{ij} = \sqrt{r_i2 + r_j^2 + 2r_ir_j\cos(\Phi(e_{ij})}\,,
\end{equation}
where the $r_i$'s represent the radii of the circle packing defined by the edge lengths above and by the intersection angles $\Phi(ij) = \Phi(e_{ij})$ between the adjacent circles of the packing.

Note that, if starting with a polyhedral surface $S_{Pol}$, 
in order that (\ref{eq:RicciSmooth}), (\ref{eq:RicciFlow+}) should hold (and, indeed, make sense as a flow),
it is important to approximate  $S_{Pol}$ both in the metric sense and also that curvature is well approximated, hence the interest in -- and the need for -- Proposition \ref{prop:BKapprox}.

A proof of Proposition \ref{prop:BKapprox} is given in \cite{BK} (where a stronger result is also proved). In a nutshell, it proceeds as follows: Excise small enough (disjoint) disk neighbourhoods of each of the vertices. The exterior of the union of such neighbourhoods is clearly approximable by a smooth surface composed of flat regions (the interior of the faces) and zero curvature cylinders (around the edges). Each of the neighbourhoods can be approximated by a smooth surface $S(p)$, such that $\partial S(p)$ is a geodesic $m_p$-gon, such that the sum of its exterior angles is $m_p\pi - \sum_i\alpha_i(p)$. Obviously, as noted by Brehm and  K\"{u}hnel, the 
proof above extends to surfaces only {\it locally embedded} in $\mathbb{R}^3$. (See also Remark \ref{rem:last} below regarding the existence of such local embeddings.) Let us also note the fact that -- a fortiori, given the freedom in the construction, provided by the supplementary  dimensions --  it also holds for surfaces embedded in $\mathbb{R}^N$, for some $N > 3$.\footnote{A polyhedral model of the {\it Boy surface} \cite{Br} provides a good illustrating example.} This is important if an abstract surface is given and when one must start by considering first an isometric embedding in  $\mathbb{R}^N$. (See \cite{Sa}, \cite{Sa1} for a discussion on the feasibility and practicability of such embeddings for $PL$, as well a more general types of surfaces, both in the applied and purely mathematical contexts.)

However, we do not content with the previous demonstration, but we also provide a different proof, that also applies 
to a larger class of surfaces. Moreover, it captures 
better the meaning of the notion of Hausdorff convergence and its interplay 
with curvature.

Our approach is less elementary, but in a sense more natural for geometers and topologists, as well as for image processing applications, and that uses only standard analytic tools: Instead of building the smooth surfaces from a set of ``standard elements'' (cylinders, etc.), we consider instead {\it smoothings} $S_m^2$ (see, e.g. \cite{Mun}).\footnote{A similar approach of approximating discrete structures by smooth ones is adopted also in theoretical physics \cite{FL1}, \cite{FL2}, the paradigm therein being that the structure of space-time at the smallest scales is, in fact, discrete and that classical models are smooth approximations of these structures.}
Since, by \cite{Mun}, Theorem 4.8, such smoothings are $\delta$-{\it approximations}, and therefore, for $\delta$ small enough, also $\alpha$-approximations 
of the given piecewise-linear surface $S^2_{Pol}$, they approximate arbitrarily well both distances and angles on $S^2_{Pol}$. (Not to encumber the presentation with too many details regarding tools of differential topology, we have concentrated the relevant definitions, and results in an appendix.) Therefore
angles, hence defects, are arbitrarily well approximated as well.
While Munkres' results concern $PL$ manifolds, they can be applied to polyhedral ones as well, because, by 
definition, polyhedral manifolds have simplicial subdivisions (and furthermore, such that all vertex links are combinatorial manifolds) -- see, e.g. \cite{BS}, p. 346.
The fact that 2-dimensional manifolds with a $CW$ complex structure are also smoothable follows from the fact that any manifold of dimension $\leq 3$ admits a $PL$ structure (see, e.g. \cite{Th}) and that, furthermore, this structure admits a unique smoothing (see, e.g. \cite{Mun}).
In consequence, Gauss curvature of the smooth surface approximates arbitrarily well combinatorial curvature, at the essential common points (i.e. the vertices of the given polyhedral surface).
We note that, since, by \cite{Ha}, Corollary 5.2 (see also \cite{CK}, Proposition 5.4) the Ricci flow is conformal, it follows that combinatorial curvature approximates arbitrarily well the curvature of the evolved surfaces, at any time $t$ (see Section 2 below for details).

Note that our proof renders in fact a somewhat stronger result than that of \cite{BK}, since no embedding in $\mathbb{R}^3$
is apriorily assumed, just in some $\mathbb{R}^N$; however, as we have already seen, this represents only a slight improvement.
More importantly, no change in the geometry of the 1-skeleton is made, not even in the neighbourhoods of the vertices.

Moreover, it follows that {\it metric quadruples} (see definition below) on $S_{Pol}$ are also arbitrarily well approximated (including their angles) by the corresponding metric quadruples) on $S_m$. But, by \cite{Wa} (see also  \cite{BM}, Theorems 11.2 and 11.3), the {\it Wald metric curvature} (see below) of $S_m$, at a point $p$, $K_W(p)$ equals the classical (Gauss) curvature $K(p)$. Hence the Gauss curvature of the smooth surfaces $S_m$ approximates arbitrarily well the metric one of $S_{PL}$ (and, as in \cite{BK}, the smooth surfaces differ from polyhedral one only on (say) the $\frac{1}{m}$-neighbourhood of the 1-skeleton of $S_{Pol}$.
This statement can be made even more precise, by assuring that the convergence is in the Hausdorff metric. This follows from results of Gromov \cite{Gr-carte} -- see \cite{SA} for details.  
That such curvatures converge not only punctually, but as measures as well, i.e. that the so called ${\rm CCP}(K)$ property of \cite{BK}, Proposition 1,\footnote{this corresponds, essentially,  to Condition (2) in Theorem \ref{prop:BKapprox} above} also holds, follows, as a particular case, from \cite{CMS}, Theorem 5.1, using the fact that polyhedral manifolds represent secant approximations of their own smoothings.

Here {\it metric quadruples} are defined as follows:

\begin{defn}
Let $(M,d)$ be a metric space, and let $Q = \{p_1,...,p_4\} \subset M$, together with the mutual distances:
$d_{ij} = d_{ji} = d(p_i,p_j); \, 1 \leq i,j \leq 4$. The set $Q$ together with the set of distances
$\{d_{ij}\}_{1\leq i,j \leq 4}$ is called a {\it metric quadruple}.
\end{defn}

\begin{rem}
Metric quadruples can be defined in a slightly more abstract manner, without the aid of the ambient
space: a metric quadruple being defined, in this approach, as a $4$ point metric space; i.e. $Q = \big(\{p_1,...,p_4\}, \{d_{ij}\}\big)$, where
the distances $d_{ij}$ verify the axioms for a metric.
\end{rem}

Before we can define the notion of embedding curvature, we have first to introduce some notation: Let $S_{\kappa}$ denote the
complete, simply connected surface of constant Gauss curvature $\kappa$, i.e. $S_{\kappa} \equiv \mathbb{R}^2$,
if $\kappa = 0$; $S_{\kappa} \equiv \mathbb{S}^2_{\sqrt{\kappa}}$\,, if $\kappa
> 0$; and $S_{\kappa} \equiv \mathbb{H}^2_{\sqrt{-\kappa}}$\,, if $\kappa < 0$. Here $S_{\kappa} \equiv
\mathbb{S}^2_{\sqrt{\kappa}}$ denotes the sphere of radius  $R = 1/\sqrt{\kappa}$, and $S_{\kappa} \equiv
\mathbb{H}^2_{\sqrt{-\kappa}}$ stands for the hyperbolic plane of curvature $\sqrt{-\kappa}$, as represented by
the Poincar\'{e} model of the plane disk of radius $R = 1/\sqrt{-\kappa}$\,.

\begin{defn}
 The {\em embedding curvature} $\kappa(Q)$ of the metric quadruple $Q$ is defined to be the curvature $\kappa$ of the gauge surface
$S_{\kappa}$ into which $Q$ can be isometrically embedded.
\end{defn}

We are now able to bring the definition of {\it Wald curvature} \cite{Wa} (or rather of its modification due to Berestovskii \cite{Ber}):

\begin{defn} \label{def:WBcurv}
Let $(X,d)$ be a metric space. 
An open set $U \subset X$ is called a {\it region of curvature} $\geq \kappa$ iff any metric quadruple can be isometrically embedded in $S_m$, for some $m \geq k$.
A metric space $(X,d)$ is said to have 
{\it Wald-Berestovskii curvature} $\geq \kappa$ iff for any $x \in X$ is contained in a region $U$ of curvature $\geq \kappa$.
\end{defn}

\begin{rem}
While the second part of the definition above is not needed in the remainder of the paper,
we bring it for completeness (and for its importance elsewere -- see, e.g. \cite{Sa}, \cite{Sa1}).
\end{rem}

\begin{rem}
Note that we can consider the Wald-Berestovskii curvature at an accumulation point of a metric space, hence on a smooth surface, by considering limits of the curvatures of (nondegenerate) regions of diameter converging to 0.
\end{rem}

\begin{rem}
The Wald-Berestovskii of a metric can actually be computed, using the following formula for the embedding curvature of a metric quadruple
\begin{equation} \label{eq:k(Q)}
 \kappa(Q) = \left\{
         \begin{array}{clclcrcr}
           \mbox{0} &  \mbox{if $D(Q) = 0$\,;} \\
           \mbox{$\kappa,\, \kappa < 0$} & \mbox{if $det({\cosh{\sqrt{-\kappa}\cdot d_{ij}}}) = 0$\,;} \\
           \mbox{$\kappa,\, \kappa > 0$} & \mbox{if $det(\cos{\sqrt{\kappa}\cdot d_{ij}})$ and $\sqrt{\kappa}\cdot d_{ij} \leq
           \pi$}\\
           & \mbox{\,\, and all the principal minors of order $3$ are $\geq 0$;}
         \end{array}
   \right.
\end{equation}
where $d_{ij} = d(x_i,x_j), 1 \leq i,j \leq 4$, and $D(Q)$ denotes the so called {\it Cayley-Menger determinant}:
\begin{equation}                                \label{eq:D}
 D(x_1,x_2,x_3,x_4) = \left| \begin{array}{ccccc}
                                            0 & 1 & 1 & 1 & 1 \\
                                            1 & 0 & d_{12}^{2} & d_{13}^{2} & d_{14}^{2} \\
                                            1 & d_{12}^{2} & 0 & d_{23}^{2} & d_{24}^{2} \\
                                            1 & d_{13}^{2} & d_{23}^{2} & 0 & d_{34}^{2} \\
                                            1 & d_{14}^{2} & d_{24}^{2} & d_{34}^{2} & 0
                                      \end{array}
                               \right|\;.
\end{equation}
However, it should be noted that, as far as the 
actual computation of $\kappa(Q)$ using Formula (\ref{eq:k(Q)}) is concerned, the equations involved are -- apart from the Euclidean case --  transcendental, are not solvable, in general, using elementary methods. Moreover, when solving them by with the assistance of computer assisted  methods, 
they display certain numerical instability. For a more detailed discussion and some first numerical results, see \cite{Sa04}, \cite{SA}.
\end{rem}

Evidently, in the context of polyhedral surfaces, the natural choice for the open set $U$ required in Definition \ref{def:WBcurv} is the closed {\it star} of a given vertex $v$, that is, the set $\{e_{vj}\}_j$ of edges incident to 
$v$. Therefore, for such surfaces, the set of metric quadruples 
containing the vertex $v$ is finite.

The combinatorial (defect) and metric (Wald-Berestovskii) notions of curvature are more closely related than just by having as limit, when the mesh of the polyhedral approximation of a smooth surface tends to zero, the Gauss curvature of the said (smooth) surface. Indeed, Wald-Berestovskii curvature can be characterized in terms of angles' sum at a vertex (hence defect). Before formally stating this result, we first need to introduce some further notation:

Given three points $x_i,x_j,x_l$ in a metric space $(X,d)$, we denote by $\alpha_\kappa(x_i,x_j,x_l)$ $\in [0,\pi]$, the angle $\measuredangle(x_jx_ix_l)$\footnote{that is of apex $x_i$} of the model triangle in $S^n_\kappa$.
%
%
Let $Q = \{x_1,x_2,x_3,x_4\}$ be a {\it metric quadruple}. 
We introduce the following quantity associated with $Q$: 
\begin{equation} \label{eq:Vkappa}
V_\kappa(x_i) = \alpha_\kappa(x_i;x_j,x_l) + \alpha_\kappa(x_i;x_j,x_m) + \alpha_\kappa(x_i;x_l,x_m)\,
\end{equation}
where $x_i,x_j,x_l,x_m \in Q$ are distinct, and $\kappa$ is any number.
%
%


We can now bring the promised characterization of Wald-Berestovskii in terms of angle sum:

\begin{prop}[\cite{Pl}, Theorem 23] \label{prop:wald-vs-Vkappa} 
Let $(X,d)$ be a metric space and let $U \in X$ be an open set. $U$ is a region of curvature $\geq \kappa$ iff $V_\kappa(x) \leq 2\pi$, for any metric quadruple $\{x,y,z,t\} \subset U$.
\end{prop}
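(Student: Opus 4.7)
The plan is to prove the two implications separately, using a planar-geometry argument for the direct implication and a continuity/monotonicity argument for the converse.

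For the direction ($\Rightarrow$): assume $U$ is a region of curvature $\geq \kappa$ and fix any metric quadruple $Q = \{x,y,z,t\} \subset U$ together with a chosen apex $x$. By hypothesis, $Q$ admits an isometric realization in $S_{m}$ for some $m \geq \kappa$. Since $S_m$ is two-dimensional, the three model triangles $(x,y,z)$, $(x,y,t)$, $(x,z,t)$ meet at $x$ with angles that are exactly the three pairwise angular separations of the directions from $x$ to $y, z, t$ in $S_m$. A straightforward planar-combinatorics argument shows that these three angular separations satisfy one of two alternatives: either one direction lies between the other two (so one angle equals the sum of the other two, forcing the total to be at most $2\pi$ since each individual angle is at most $\pi$), or the three directions surround $x$ (and the three angles sum to exactly $2\pi$). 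In either case $V_m(x) \leq 2\pi$. Finally I invoke the Alexandrov-type monotonicity of model triangle angles in the curvature parameter (for fixed side lengths, the angles of the model triangle are nondecreasing in $m$) to deduce $V_\kappa(x) \leq V_m(x) \leq 2\pi$, which is what is required.

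For the direction ($\Leftarrow$): assume $V_\kappa(x_i) \leq 2\pi$ for every metric quadruple in $U$ and every apex $x_i$. Fix a quadruple $Q$ and study the function $m \mapsto V_m(x_i)$ on the interval of parameters $m$ for which all three triangles at $x_i$ are realizable in $S^2_m$ (this interval is unbounded below; for $m > 0$ it is bounded by the spherical condition $\sqrt{m}\,d_{ij} \leq \pi$). By monotonicity this function is continuous and nondecreasing, and tends to $0$ as $m \to -\infty$. I then seek the smallest value $m^* \geq \kappa$ at which one of the two embedding alternatives of the first paragraph becomes realizable: either some $V_{m^*}(x_i) = 2\pi$, allowing $x_i$ to be placed at the interior point of a planar realization of the other three vertices in $S_{m^*}$; or the collinearity condition (one angle of the three at some $x_i$ equals the sum of the other two) holds, allowing a convex quadrilateral realization. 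Assuming such an $m^*$ is reached, this produces the desired isometric embedding of $Q$ into $S_{m^*}$ with $m^* \geq \kappa$.

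The main obstacle I anticipate is precisely the existence of such an $m^*$ in $[\kappa,\infty)$ — in other words, ruling out the pathological situation in which, as $m$ increases from $\kappa$, the triangles in $S^2_m$ cease to be realizable (because a side exceeds $\pi/\sqrt{m}$) before any of the closing conditions is met. To control this one must use all four inequalities $V_\kappa(x_i) \leq 2\pi$ simultaneously, together with the Cayley--Menger-type constraint relating the six distances. Essentially one needs to verify that the hypotheses force the determinantal conditions (\ref{eq:k(Q)}) to admit a root $\kappa(Q) \geq \kappa$, ruling out the incompatibilities that would correspond to $\kappa(Q)$ not existing or lying below $\kappa$. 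Once this joint monotonicity/continuity argument is in place, the boundary cases (for instance $V_\kappa(x_i) = 2\pi$ already holding, in which case $m^* = \kappa$) fall out immediately.
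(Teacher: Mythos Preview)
The paper does not supply its own proof of this proposition: it is stated as a citation of \cite{Pl}, Theorem~23, and is used as a black box thereafter. So there is no ``paper's proof'' to compare your attempt against.

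That said, a few remarks on your argument itself. Your $(\Rightarrow)$ direction is essentially correct and is the standard one: once the quadruple sits isometrically in the two-dimensional model $S_m$, the three initial directions at $x$ lie on a circle, and the three pairwise angular separations either satisfy the ``betweenness'' relation (sum $\leq 2\pi$, with equality of one to the sum of the other two) or partition the full circle (sum $=2\pi$); monotonicity of comparison angles in the curvature parameter then pushes the inequality down from $m$ to $\kappa$.

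Your $(\Leftarrow)$ direction, however, is where the substance of Wald's theorem lies, and you have correctly identified---but not closed---the gap. The continuity/monotonicity strategy is the right one, but the existence of the embedding parameter $m^\ast \geq \kappa$ genuinely requires the simultaneous use of the four apex inequalities together with a nontrivial analysis of how the determinantal condition in~(\ref{eq:k(Q)}) degenerates as $m$ increases. In particular, one must show that the ``closing'' configuration (either $V_m(x_i)=2\pi$ for some $i$, or a collinearity) is reached \emph{before} any of the comparison triangles becomes spherically inadmissible. As written, your proposal is an outline with an honestly flagged hole rather than a proof; filling it is exactly the content of the result in \cite{Pl} (and originally Wald \cite{Wa} and Berestovskii \cite{Ber}).
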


The result 
above shows that, in fact, the metric approach to curvature is equivalent to the combinatorial (angle-based) one, as far as polyhedral surfaces (in $\mathbb{R}^3$) are concerned. (In fact, the metric approach is more general, since it can be applied
to a very large class of 
metric spaces. Also, we should again emphasize that, as far as approximations of smooth surfaces in $\mathbb{R}^3$ are concerned, both approaches render,
in the limit, the classical Gauss curvature.)


Note that 
we just gave
a positive answer to the question --  unposed so far, to the best of our knowledge -- whether the metric curvature version of Brehm and H\"{u}hnels's basic result also holds, namely we have proved:


\begin{prop} \label{prop:Waldapprox}
Let $S^2_{Pol}$ be a compact polyhedral surface without boundary. Then there exists a sequence $\{S^2_m\}_{m \in \mathbb{N}}$ of smooth surfaces, (homeomorphic to $S^2_{Pol}$), such that
\begin{enumerate}
\item
\begin{enumerate}
\item $S^2_m = S^2_{Pol}$ outside the $\frac{1}{m}$-neighbourhood of the 1-skeleton of $S^2_{Pol}$,
\item The sequence $\{S^2_m\}_{m \in \mathbb{N}}$ converges to 
$S^2_{Pol}$ in the Hausdorff metric;
\end{enumerate}
\item $K(S^2_{Pol}) \rightarrow K_W(S^2_{Pol})$, where the convergence is in the weak sense.
\end{enumerate}
\end{prop}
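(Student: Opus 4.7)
The plan is to carry out the construction and verifications already foreshadowed in the discussion preceding the statement. For each $m$ I would produce a smoothing $S^2_m$ of $S^2_{Pol}$ in the sense of Munkres (\cite{Mun}, Theorem~4.8) with smoothing parameter $\delta < 1/m$, chosen so that $S^2_m$ coincides with $S^2_{Pol}$ outside the $1/m$-neighborhood of the 1-skeleton. This gives (1a) by construction, while (1b) is immediate from the definition of $\delta$-approximation. For $\delta$ small enough such smoothings are also $\alpha$-approximations, so that all distances and, decisively, all angles between corresponding tangent directions are approximated arbitrarily well.

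For the curvature statement (2), I would first reduce to the Gauss curvature on the smooth side via the theorem of Wald (\cite{Wa}; see also \cite{BM}, Theorems 11.2--11.3): at any point $p$ of a smooth surface one has $K_W(p) = K(p)$. Thus it suffices to compare the two realizations of $K_W$, one on the smoothings and one on $S^2_{Pol}$ itself. Plaut's characterization (Proposition~\ref{prop:wald-vs-Vkappa}) expresses $K_W$ at a vertex $v$ in terms of the quantities $V_\kappa$ attached to metric quadruples in the star of $v$. Since only finitely many such quadruples need to be considered around each vertex, and since the $\alpha$-approximation guarantees convergence both of their pairwise distances and of the comparison angles $\alpha_\kappa$, the embedding curvatures $\kappa(Q)$ converge, and with them the total $K_W$-mass concentrated near each vertex.

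To promote this local mass convergence to weak convergence of measures, I would appeal to the Cheeger--M\"uller--Schrader theorem (\cite{CMS}, Theorem~5.1), exactly as flagged in the introductory discussion: $S^2_{Pol}$ is a secant approximation of each of its smoothings $S^2_m$, which is precisely the hypothesis needed, and the theorem yields weak convergence of the Gauss curvature measures of the $S^2_m$ to a measure on $S^2_{Pol}$ supported at the vertices of the 1-skeleton. Combining this with the pointwise step identifies the limit measure with $K_W(S^2_{Pol})$, which is (2).

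The main obstacle I anticipate is technical rather than conceptual and lies in the bookkeeping at each vertex $v$: one must verify that as $m \to \infty$ all the Gauss-curvature mass of $S^2_m$ accumulated in the $1/m$-tubular neighborhood of the 1-skeleton concentrates at the vertices, with no leakage along the edges (whose contribution to $K_W$ vanishes), and that the mass at each vertex matches the angular defect exactly. This is precisely what the $\alpha$-approximation property combined with Plaut's characterization is designed to deliver, but making the estimates quantitative and uniform across the finitely many vertices of $S^2_{Pol}$ is where the real work lies.
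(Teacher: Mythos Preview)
Your proposal is correct and follows essentially the same route as the paper: Munkres smoothings as $\delta$- (hence $\alpha$-) approximations for parts (1a)--(1b), Wald's identification $K_W=K$ on the smooth side, and Cheeger--M\"uller--Schrader (\cite{CMS}, Theorem~5.1) for weak convergence of the curvature measures. The only cosmetic difference is that you route the vertex-curvature identification through Plaut's characterization (Proposition~\ref{prop:wald-vs-Vkappa}), whereas the paper argues more directly that metric quadruples and their embedding curvatures are preserved under $\alpha$-approximation; both arguments use the same ingredients.
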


\begin{rem}
The converse implication -- namely that Gaussian curvature $K(\Sigma)$ of a smooth surface $\Sigma$ may be approximated arbitrarily well by the Wald curvatures $K_W(\Sigma_{Pol,m})$ of a sequence of  approximating polyhedral surfaces $\Sigma_{Pol,m}$ -- is, as we have already mentioned above, quite classical.  
(For other approaches to curvatures convergence, see, amongst the extensive literature dedicated to the subject,  \cite{CMS} and \cite{BCM}, \cite{C-SM}, for the theoretical and applicative viewpoints, respectively.)
\end{rem}

We should also stress again the properties of the Gromov-Hausdorf convergence of finite $\varepsilon$-{\it nets}\footnote{Just for the record, recall that, given a metric space $(X,d)$, a $A \subset X$ is called an $\varepsilon${\it-net} iff $d(x,A) \leq \varepsilon$, for any $x \in X$.} in any sequence of approximating surfaces $S^2_m$ (polyhedral or smooth) of a given surface $S^2$ (again, smooth or not).
In particular, by considering $\varepsilon$-nets on surfaces, one automatically ensures (see \cite{Gr-carte}, \cite{BBI}) any {\it intrinsic} geometric property of an approximating to the respective geometric property of the limiting geometric object. Most important for us, this holds for the intrinsic metric and thence for the metric curvatures -- see \cite{Sa04}, \cite{SA} for a more detailed discussion and some numerical experiments.

Also, one can consider simultaneously the combinatorial/metric Ricci flow on a polyhedral surface $S^2_{Pol}$, as well its classical counterpart on its smoothing $S^2$. Since, as we have already noted above, the metric and the classical curvatures, $K_W(S^2_{Pol})$ and $K(S^2_m)$, respectively, are arbitrarily close to each other, and since the equations the two respective flows (combinatorial/metric and classical/smooth) contain the same curvature term, the ensuing metrics at each time during the flow will coincide on the common set, i.e the 1-skeleton of the polyhedral manifold and in the exterior of an arbitrarily small neighbourhood of it. Therefore, the limit surfaces for both flows -- $S^2_{0,Pol}$ and $S^2_0$, respectively -- will be isometric on the said set but perhaps only arbitrarily close to being isometric in the considered neighbourhood.\footnote{One can ensure actual isometry by imposing a certain additional constraint on the so called ``{\it volume density}'' of the surface -- for details, see \cite{Te}.}
Moreover, by considering dense enough $\varepsilon$-nets  (of arbitrarily small mesh), the intrinsic metric of a polyhedral approximation $g_{Pol}$ of a smooth manifold, and the (smooth) metric $g$ of the later will be arbitrarily close to each other. (If one does not consider a limiting process, then the distortion of $g$ by $g_{Pol}$ can be computed, at each time ``$t$''
during the flow, using such formulae as (\ref{eq:metric-dist}) and (\ref{eq:metric-dist2}) below, in conjunction with the computations in, e.g.,  \cite{Pe}, Lemma 3.19.\footnote{For manifolds with boundary, similar distortion estimates follow from the results in \cite{Sa05}. Of course, in this case, one still obtains isometry when restricting to the 1-skeleton.}
This answers to a question posed to us by D. X. Gu.

Before concluding this section, 
we should note that one can control the deformation both of the metric and of (combinatorial) curvature during the Ricci flow -- for details see the proof of Theorem \ref{thm:flow-rate-conv} below.

\section{Main results}
From the ``good'', i.e. metric and curvature, approximations results above, it follows that one can study the properties of the combinatorial Ricci flow via those of its smooth counterpart, by passing to a smoothing of the polyhedral surface. The heavier machinery of metric curvature considered above pay off, in the sense that, by using it, the ``duality'' between the combinatorics of the packings (and angles) and the metric disappears: The flow is purely metric and, moreover, the curvature at each stage (that is, for every ``$t$'') is given -- as in the classical context -- in an intrinsic manner, i.e. solely 
in terms of the metric.

A number of important properties now follow immediately.

\subsection{Existence and uniqueness (forwards and backwards)} In particular, the (local) existence and uniqueness of both the {\it forward} and {\it backward} (i.e. given by $dg_{ij}(t)/dt = 2K(t)g_{ij}(t))$ Ricci flows hold, on some {\it maximal} time interval $[0,T]; 0< T \leq \infty$ (see, e.g. \cite{To}, Theorems 5.2.1 and 5.2.2 and the discussion following them). Beyond the theoretical importance, the existence and uniqueness of the backward flow allow us to find surfaces in the (conformal) class of a given circle packing (Euclidean or Hyperbolic).
More importantly, the use of purely metric approach (based on the Wald curvature or any of other equivalent metric curvatures), rather than the combinatorial (and metric) approach of \cite{CL}, allows us to give a first, theoretical at this point, answer to Question 2, p. 123, of \cite{CL}, namely whether there exists a Ricci flow defined on the space of all piecewise constant curvature metrics (obtained via the assignment of lengths to a given triangulation of 2-manifold). Since, by Hamilton's results \cite{Ha} (and those of Chow \cite{Ch}, for the case of the sphere), the Ricci flow exists for all compact surfaces, it follows from our arguments above that the fitting metric (combinatorial) flow exits for surfaces of piecewise constant curvature. In consequence, given a surface of piecewise constant curvature (e.g. a mesh with edge lengths satisfying the triangle inequality for each triangle), one can evolve it by the Ricci flow, either forward, as in works discussed above, to obtain, after the suitable area normalization, the polyhedral surface of constant curvature conformally equivalent to it; or backwards, to find the ``primitive'' family of surfaces (including the ``original'' surface) conformally equivalent to the given one. (Here, by ``original'', we mean the surface obtained via the backwards Ricci flow, at time $T$.) It is not necessarily true, however, that all the surfaces obtained via the backwards flow are embedded (or, indeed, embeddable) in $\mathbb{R}^3$ -- for details see Section 3 below.
We can summarize the discussion above as

\begin{prop}
Let $(S^2_{Pol},g_{Pol})$ be a compact polyhedral 2-manifold without boundary, having bounded combinatorial (metric) curvature.
Then there exists $T > 0$ and a smooth family of polyhedral metrics $g(t), t \in [0,T]$, such that
\begin{equation}
\left\{
\begin{array}{ll}
\frac{\partial g}{\partial t} = -2K(t)g(t) & t \in [0,T]\,;\\
g(0) = g_{Pol}\,.
\end{array}
\right.
\end{equation}
(Here $K(t)$ denotes the combinatorial (resp. Wald) curvature induced by the metric g(t).)

Moreover, both the forwards and the backwards Ricci flows have the uniqueness of solutions property, that is, if $g_1(t), g_2(t)$ are two Ricci flows on $(S^2_{Pol}$, such that there exists $t_0 \in [0,T]$ such that $g_1(t_0) = g_2(t_0)$, then $g_1(t) = g_2(t)$, for all $t_0 \in [0,T]$.
\end{prop}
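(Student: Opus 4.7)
The plan is to transfer the existence and uniqueness statements from the classical smooth Ricci flow (Hamilton \cite{Ha}, Topping \cite{To}) to the polyhedral setting by means of the smoothing approximation already established in Propositions \ref{prop:BKapprox} and \ref{prop:Waldapprox}. First I would fix a smoothing $S^2_m$ of $(S^2_{Pol},g_{Pol})$ that coincides with $S^2_{Pol}$ outside the $1/m$-neighborhood of the $1$-skeleton, and whose Gauss curvature approximates the combinatorial/Wald curvature arbitrarily well (pointwise on the $1$-skeleton, and in the weak-measure sense globally). The hypothesis that $(S^2_{Pol},g_{Pol})$ has bounded combinatorial (metric) curvature is what allows $m$ to be taken so that the curvature of $S^2_m$ is uniformly bounded, which in turn yields the uniform short-time existence interval $[0,T]$ below.

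Next I would invoke Hamilton's short-time existence theorem for the smooth surface Ricci flow (see \cite{To}, Theorem 5.2.1) to obtain a smooth family $g_m(t)$, $t\in[0,T]$, of metrics on $S^2_m$ solving
\begin{equation}
\frac{\partial g_m}{\partial t} = -2K(g_m(t))\, g_m(t),\qquad g_m(0)=g_{S^2_m},
\end{equation}
with $T>0$ depending only on bounds for the initial curvature, hence independent of $m$ for $m$ large. Because the surface Ricci flow is conformal (\cite{Ha}, Corollary 5.2), $g_m(t)=e^{2u_m(\cdot,t)}g_m(0)$, so the evolution is governed by a scalar conformal factor. Restricting to the complement of the $1/m$-neighborhood of the $1$-skeleton, where $S^2_m\equiv S^2_{Pol}$, gives a well-defined evolution of the polyhedral metric on the flat faces; on the $1$-skeleton itself, the conformal factor $u_m$ is controlled by the curvatures at the vertices, which by Proposition \ref{prop:Waldapprox} converge to the combinatorial/Wald curvatures of $S^2_{Pol}$. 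Passing to the limit $m\to\infty$ in the weak-measure sense yields a family $g(t)$ of polyhedral metrics on $S^2_{Pol}$ satisfying the displayed equation, and one checks that this family is smooth in $t$ since it is obtained by a smooth conformal factor multiplying $g_{Pol}$.

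For uniqueness, suppose $g_1(t),g_2(t)$ are two Ricci flows on $(S^2_{Pol},g_{Pol})$ that agree at some $t_0\in[0,T]$. Smoothing both flows by the procedure above produces two smooth Ricci flows $g_{1,m}(t),g_{2,m}(t)$ on the same smoothing $S^2_m$, which agree at $t_0$ up to an error going to zero with $m$. Hamilton's uniqueness theorem for the smooth surface Ricci flow (\cite{To}, Theorem 5.2.2) then forces $g_{1,m}(t)=g_{2,m}(t)$ on all of $[0,T]$, and passing to the limit gives $g_1(t)=g_2(t)$. The backward direction is handled identically, using the backward counterpart of Hamilton's theorem, which holds on the same maximal interval since the flow is conformal and governed by a scalar parabolic equation that is reversible on $[0,T]$ so long as the curvature stays bounded.

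The main obstacle I anticipate is not the formal manipulations but rather verifying that the limit $m\to\infty$ genuinely produces a \emph{polyhedral} (piecewise constant curvature) metric $g(t)$ at each time, rather than merely a metric close to being polyhedral. This is what the purely metric Wald--Berestovskii formulation in Definition \ref{def:WBcurv}, combined with Proposition \ref{prop:wald-vs-Vkappa}, is designed to circumvent: the flow is intrinsic and depends only on metric data, and one checks that if $g_{Pol}$ is piecewise constant curvature then the conformal factor obtained from the smooth flow, when restricted to the $1$-skeleton and flat faces and extended by the Wald-curvature prescription at vertices, preserves the piecewise constant curvature class. The bounded curvature hypothesis is exactly what prevents the conformal factor from degenerating on the shrinking neighborhoods of the vertices as $m\to\infty$, thereby ensuring that both $T>0$ and the polyhedral structure survive in the limit.
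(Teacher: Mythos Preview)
Your proposal is correct and follows essentially the same approach as the paper: pass to a smoothing via Propositions \ref{prop:BKapprox} and \ref{prop:Waldapprox}, invoke the classical existence and uniqueness theorems for the smooth surface Ricci flow (\cite{To}, Theorems 5.2.1 and 5.2.2), use conformality (\cite{Ha}, Corollary 5.2) to control the evolution, and transfer the conclusions back to the polyhedral metric. The paper presents this proposition explicitly as a summary of the preceding discussion rather than giving a separate proof, so your more detailed treatment of the limiting process $m\to\infty$ and the role of the Wald--Berestovskii formulation simply fleshes out what the paper leaves implicit.
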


In fact, the existence and uniqueness  of the Ricci flow hold even if we do not restrict to compact surfaces, but we still require that the manifold is complete. Indeed, by applying the ideas employed in the proof above to a result of Shi \cite{Sh} (see also \cite{IMS1}), we obtain the following

\begin{prop}
Let $(S^2_{Pol},g_{Pol})$ be a complete polyhedral surface of metric (combinatorial) curvature bounded from above. Then there exists a (small) $T$ as above, such that there exists a unique solution of (\ref{eq:RicciSmooth}) for any $t \in [0,T]$.
\end{prop}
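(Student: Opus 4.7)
The plan is to mimic the proof of the preceding proposition but to replace the compact-case existence result with Shi's short-time existence and uniqueness theorem for the Ricci flow on complete Riemannian manifolds of bounded curvature \cite{Sh} (cf.\ \cite{IMS1}). The strategy has three steps: produce a smooth, complete surface $S^2_m$ approximating $S^2_{Pol}$ with a Gaussian curvature bound that is uniform in $m$; apply Shi's theorem to each $S^2_m$ to obtain a smooth Ricci flow on a time interval $[0,T]$ independent of $m$; then transfer the resulting flow back to $S^2_{Pol}$ using the metric/curvature convergence of Proposition \ref{prop:Waldapprox} together with the conformality of the 2D flow.

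For the first step I would apply the Munkres-type smoothing of $S^2_{Pol}$ in the $\tfrac{1}{m}$-neighbourhood of the 1-skeleton, as in Section 1. Completeness of $S^2_m$ is automatic since the smoothing leaves $S^2_{Pol}$ unchanged outside a small neighbourhood of each vertex and edge. The delicate point is to choose the smoothing so that the Gaussian curvature of $S^2_m$ is bounded above \emph{uniformly in $m$}. This is possible because the assumed upper bound on the metric (defect) curvature of $S^2_{Pol}$ bounds the total amount of curvature that must be redistributed around each vertex, and the Munkres procedure allows one to spread this curvature over the smoothing neighbourhood so that its density is comparable to the defect divided by a fixed multiple of the local injectivity radius, rather than concentrating it as $m$ grows.

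For the second step, Shi's theorem applied to each $(S^2_m,g_m)$ yields a unique smooth Ricci flow $g_m(t)$ on a time interval $[0,T]$, with $T > 0$ depending only on the uniform curvature bound obtained in Step~1, hence independent of $m$. Since the 2D Ricci flow is conformal (Hamilton \cite{Ha}, Corollary 5.2), the flows $g_m(t)$ remain conformal perturbations of $g_m$, and the parabolic estimates of \cite{To}, \cite{To1} give uniform control on every compact set. Extracting a subsequential limit as $m \to \infty$ produces a family $g(t)$ of polyhedral metrics on $S^2_{Pol}$ solving the combinatorial/metric Ricci flow on $[0,T]$ and agreeing with $g_{Pol}$ outside an arbitrarily small neighbourhood of the 1-skeleton, as required. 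Uniqueness is inherited from Shi's uniqueness statement: two distinct solutions of the polyhedral flow emanating from $g_{Pol}$ would produce, via the same smoothing, two distinct smooth solutions on $(S^2_m,g_m)$, contradicting \cite{Sh}.

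The main obstacle is the uniformity of the curvature bound in Step~1: a naive smoothing concentrates each vertex defect into a shrinking bump, so the pointwise Gaussian curvature of $S^2_m$ blows up as $m \to \infty$ and Shi's existence time collapses. Overcoming this is precisely the role of the hypothesis that the combinatorial curvature is bounded from above, which caps the total defect that each smoothing bump must absorb and allows one to arrange a uniform bound across $m$. Once this is in place, the remaining steps are a direct adaptation of the compact argument.
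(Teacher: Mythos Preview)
Your overall strategy---smooth the polyhedral surface, invoke Shi's short-time existence/uniqueness theorem on the smoothing, and transfer the flow back via the approximation results of Section~1---is precisely what the paper has in mind. Indeed, the paper's own argument is the single clause ``by applying the ideas employed in the proof above to a result of Shi \cite{Sh} (see also \cite{IMS1})''; it does not spell out any of the three steps you list, and in particular it never confronts the uniformity issue you isolate in Step~1.

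That said, your resolution of that obstacle does not work as written. The combinatorial (defect) curvature at a vertex is a \emph{total} curvature: by Gauss--Bonnet, the integral of the Gaussian curvature of any smoothing over the smoothing neighbourhood must equal the defect, independently of the size of that neighbourhood. Hence, if the smoothing is confined to a $\tfrac{1}{m}$-neighbourhood of area of order $m^{-2}$, the \emph{pointwise} Gaussian curvature is forced to be of order $m^{2}$ times the defect, and no upper bound on the defects prevents this from blowing up as $m\to\infty$. Your sentence ``the Munkres procedure allows one to spread this curvature \ldots so that its density is comparable to the defect divided by a fixed multiple of the local injectivity radius'' is therefore incompatible with your own requirement that $S^2_m$ agree with $S^2_{Pol}$ outside the $\tfrac{1}{m}$-neighbourhood: you cannot spread the curvature over a region of fixed size while simultaneously modifying the surface only on a region of size $1/m$.

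The paper's (implicit) way around this is simply not to take a limit in $m$. One fixes a \emph{single} complete smoothing $(S^2,\widetilde{g})$ of $(S^2_{Pol},g_{Pol})$---whose Gaussian curvature is bounded once the scale is fixed, the polyhedral curvature being bounded by hypothesis---applies Shi once to obtain the smooth flow on $[0,T]$, and then invokes Propositions~\ref{prop:BKapprox} and~\ref{prop:Waldapprox} together with the conformality of the two-dimensional flow to identify the result with the combinatorial/metric flow on the $1$-skeleton, exactly as in the compact case. No sequence $m\to\infty$, and hence no uniformity-in-$m$ estimate, is required at the level of rigour the paper adopts; your subsequential-limit argument in Step~2 is an unnecessary complication that creates the very difficulty you then cannot overcome.
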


\begin{rem}
Shi's result (hence the proposition above) does not necessarily hold for noncomplete surfaces -- see \cite{To2}.
\end{rem}

Before proceeding further, we should stress that, at this point, the metric approach introduced above is purely theoretical and, while it allows for a number of (theoretical) results to be inferred from the classical theory, it lacks (at least for now) the simple algorithmic capability of the combinatorial one of Chow and Luo, and certainly of its subsequently development -- see \cite{Gu-Yau}).

\subsection{Convergence rate}
A further type of result, highly important both from the theoretical viewpoint and for computer-driven applications, is that of the convergence rate. For the combinatorial flow, it is shown in \cite{CL} that, in the case of background Euclidean (Theorem 1.1) or Hyperbolic (Theorem 1.2) metric, the solution -- if it exists -- converges, without singularities, exponentially fast to a metric of constant curvature. Using the classical results of \cite{Ha} and \cite{Ch}, we can do slightly better, since we already know that the solution exists and it is unique (see the subsection below for the nonformation of singularities). Moreover, we can control the convergence rate of the curvature:

\begin{thm} \label{thm:flow-rate-conv}
Let $(S^2_{Pol},g_{Pol})$ be a compact polyhedral 2-manifold without boundary. Then the normalized combinatorial (metric) Ricci flow converges to a surface of constant combinatorial (resp. metric) curvature. Moreover, the convergence (rate) is
\begin{enumerate}
\item exponential, if  $K < 0$; $\chi(S^2_{Pol}) < 0$);
\item uniform; if $K = 0$; 
\item exponential, if $K > 0$. 
\end{enumerate}
\end{thm}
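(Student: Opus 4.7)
The plan is to reduce the convergence question for the combinatorial (metric) normalized Ricci flow to the classical smooth convergence theorems of Hamilton \cite{Ha} and Chow \cite{Ch}, in exactly the same spirit as the existence and uniqueness results of the preceding subsection. The novelty is that one must preserve the convergence \emph{rate}, not merely convergence itself, under the passage from smooth to polyhedral.

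First I would invoke Proposition \ref{prop:Waldapprox} to obtain a smoothing sequence $\{S^2_m\}_{m \in \mathbb{N}}$ of the given polyhedral surface $S^2_{Pol}$; by construction each $S^2_m$ coincides with $S^2_{Pol}$ outside the $1/m$-neighbourhood of the 1-skeleton, and its smooth Gauss curvature converges weakly to the combinatorial (Wald) curvature of $S^2_{Pol}$. Observe that, by Gauss--Bonnet, the sign of the average curvature $K$ is a topological invariant, so the trichotomy in the statement is determined already by $\chi(S^2_{Pol})$ and is shared uniformly by every $S^2_m$ in the sequence.

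Next I would run the normalized smooth Ricci flow (\ref{eq:RicciFlow+}) on each $S^2_m$ starting from the metric induced by the smoothing. By the classical theory this flow is defined for all time and is conformal (\cite{Ha}, Corollary 5.2). Hamilton's theorem \cite{Ha} handles the cases $\chi \leq 0$, and Chow's theorem \cite{Ch} handles $\chi > 0$; taken together they yield precisely the rate of convergence to a constant curvature metric asserted in the statement -- exponential in the two regimes with nonzero $K$, and the slower uniform rate in the flat case.

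Finally, to push these rates back to the combinatorial/metric flow I would use that, since the smooth flow is conformal and the smoothings differ from $S^2_{Pol}$ only within the $1/m$-neighbourhood of the 1-skeleton, the smooth evolution $g_m(t)$ and the combinatorial/metric evolution $g_{Pol}(t)$ of the same initial data coincide on the 1-skeleton and are within $O(1/m)$ elsewhere (compare the distortion discussion at the end of Section 1 and the control of deformation of metric and curvature noted there). The Hamilton/Chow rate estimates for $g_m(t)$ then descend, upon letting $m \to \infty$, to the analogous estimates for the Wald (combinatorial) curvature of $g_{Pol}(t)$. The main obstacle I anticipate is precisely this limiting step: one has to verify that the exponential rate \emph{constants} appearing in Hamilton's and Chow's theorems do not deteriorate as $m\to\infty$. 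This should follow because those constants are controlled by quantities (average curvature, area, diameter, and a lower bound on the injectivity radius of $S^2_m$) that are uniformly bounded along the sequence by Proposition \ref{prop:Waldapprox}.
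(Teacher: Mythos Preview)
Your approach is essentially the same as the paper's: reduce to the smooth Hamilton--Chow theory via a smoothing and then transfer the rate back to the polyhedral setting using Propositions~\ref{prop:BKapprox} and~\ref{prop:Waldapprox}. The difference is one of organization, and it bears on precisely the obstacle you flag at the end.

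The paper does not run the flow on an entire sequence $\{S^2_m\}$ and then take a limit. Instead it fixes a single smoothing $\widetilde{g}_{Pol}$ and invokes explicit quantitative estimates from the smooth theory: the uniform metric equivalence $\frac{1}{C}\widetilde{g}_{Pol}\le \widetilde{g}_t\le C\widetilde{g}_{Pol}$ (\cite{Ha}, \cite{CK}), the curvature bounds of \cite{CK}, Proposition~5.18, in each of the three cases, and the distance-distortion estimate $e^{-Kt}\le {\rm dist}_t(x,y)/{\rm dist}_0(x,y)\le e^{Kt}$ from \cite{KL}. These inequalities come with constants $C,C'$ depending only on the initial (smoothed) metric, and the approximation propositions are invoked only once, at the end, to pass from the smoothing back to the polyhedral object. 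This sidesteps your worry about whether the Hamilton/Chow rate constants remain bounded as $m\to\infty$: there is no $m\to\infty$ in the argument. Your proposed route would also work, but the uniformity of the constants along the sequence is something you would genuinely have to check (your claimed control via injectivity radius is plausible but not automatic, since the smoothings become increasingly singular near the $1$-skeleton as $m$ grows); the paper's formulation simply avoids the issue.
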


\begin{proof}
As already noted, a unique solution for the Ricci flow exists for all $0 < t \leq T$ and, again, these solutions are uniformly (conformally) equivalent. Indeed, by \cite{Ha}, Corollary 5.2 and \cite{CK}, Proposition 5.15, there exists $C= C(\widetilde{g}_{Pol})$, where $\widetilde{g}_{Pol}$ is the smoothing of $g_{Pol}$, such that
\begin{equation} \label{eq:metric-dist}
\frac{1}{C}\widetilde{g}_{Pol} \leq \widetilde{g}_t \leq C\widetilde{g}_{Pol}\,,
\end{equation}
where $\widetilde{g}_t$ is the smoothing of $g_t$, and the discussion above shows that the same holds for the polyhedral metrics.

To estimate the convergence rate for the curvature, we make appeal to the following formulae (see, e.g. \cite{CK}, Proposition 5.18): There exists $C' = C'(g_{Pol}) > 0$, (in fact, $C' = C'(\widetilde{g}_{Pol})$), such that
\begin{enumerate}
\item If $K < 0$ 
then
\begin{equation}\label{eq:curv}
K - C'e^{Kt} \leq K(t) \leq K + C'e^{Kt}\,;
\end{equation}
\item If $K = 0$ 
then
\begin{equation}\label{eq:curv1}
- \frac{C'}{1 + C't} \leq K(t) \leq C'\,;
\end{equation}
\item If $K > 0$ 
then
\begin{equation}\label{eq:curv2}
- C'e^{Kt} \leq K(t) \leq K + C'e^{Kt}\,.
\end{equation}
\end{enumerate}

To show that the metric also converges with exponential rate, one has to 
make appeal to
a refinement of (\ref{eq:metric-dist}), namely that the constant $C$ therein is, in fact, given by $C = e^{2K_{Max}}$, where $K_{Max} = \max|K(t)|\,, t \in [0,T]$\,. (This holds, in fact, for the case of the general Ricci flow, with Gaussian curvature $K$ being replaced, of course, by the Ricci curvature ${\rm Ric}$ -- see, e.g. \cite{To}, Lemma 5.3.2.) In fact, given that the manifold under investigation is compact, hence of curvature bounded below and above, a stronger form of this improvement of (\ref{eq:metric-dist}) can be given, and, moreover, one that is better fitted for the case of polyhedral manifolds (see, e.g. \cite{KL} Lemma 27.1, Remark 27.5 and the following material):
\begin{equation} \label{eq:metric-dist2}
e^{-Kt} \leq \frac{{\rm dist}_{t}(x,y)}{{\rm dist}_{0}(x,y)} \leq e^{Kt}\,,
\end{equation}
where $K_{Max}$ is as above.

By the approximation results above, namely Propositions \ref{prop:BKapprox} and \ref{prop:Waldapprox}, the result follows for the combinatorial and Wald-Berestovskii curvatures, respectively. Alternatively, one can more directly infer the respective convergence rates from (\ref{eq:metric-dist2}) and (\ref{eq:metric-dist}) as far as the metric is concerned, and for the Wald-Berestovskii curvatures from (\ref{eq:metric-dist2}) and (\ref{eq:curv}) -- (\ref{eq:curv2}). Similarly, the convergence for the combinatorial curvature follows again from (\ref{eq:metric-dist2}) in conjunction with (\ref{eq:Vkappa})  and Proposition \ref{prop:wald-vs-Vkappa}.
\end{proof}

\begin{rem}
Existence, uniqueness and, furthermore, convergence rate results for polyhedral can be obtained, using the same techniques as before, for surfaces with (a finite number of) {\it cusps} and {\it funnels}, using quite recent results of Isenberg, Mazzeo and Sesum \cite{IMS1} (for finite area surfaces) and Albin, Aldana and Rochon \cite{AAR} (for surfaces of infinite area). We do not bring here the technical details since they would bring us too far afield -- for details and  further related results, see \cite{IMS2}.
\end{rem}


\subsection{Singularities formation} Another important aspect of any Ricci flow, be it smooth or discrete, is that of singularities formation.
By \cite{CL}, Theorem 5.1, for compact surfaces of genus $\geq 2$, the combinatorial Ricci flow evolves without singularities. However, for surfaces of low genus no such result exists. Indeed, in the case of the Euclidean background metric, that is the one of greatest interest in graphics, singularities do appear \cite{Gu}. Such singularities are always combinatorial in nature and amount to the fact that, at some $t$, the edges of at least one triangle do not satisfy the triangle inequality  \cite{Gu}. These singularities are removed in heuristic manner, using the graphics equivalent of $\varepsilon$-moves (see, e.g. \cite{Ed}). However, by \cite{Ha}, Theorem 1.1, the smooth Ricci flow exists at all times, i.e. no singularities form. By the considerations above, it follows that the metric Ricci flow also exists at all times without the formation of singularities. In fact, by a quite recent result of Topping \cite{To1}, the same result holds even for unbounded (but complete) Riemannian 2-manifolds $(M,g)$ with bounded curvature and satisfying a
certain mild noncollapsing condition, namely that, there exists $r_0 > 0$, such that, for all $x \in M$, the following holds:
\begin{equation} \label{eq:noncolapse}
{\rm Vol}_g\left(B_g(x,r_0)\right) \geq \varepsilon > 0.
\end{equation}
(Here, as usual, $B_g(x,r_0)$ denotes the open ball, in the metric $g$, of center $x$ and radius $r_0$.)

Again, we can 
recap 
the discussion above as

\begin{prop}
Let $(S^2_{Pol},g_{Pol})$ be a complete polyhedral 2-manifold, with at most a finite number of hyperbolic cusps (punctures), having bounded combinatorial (metric) curvature and satisfying the noncollapsing condition (\ref{eq:noncolapse}). Then there exists a unique Ricci flow that contracts
the cusps. Furthermore, the curvature remains bounded at all times during the flow.
\end{prop}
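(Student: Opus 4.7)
The plan follows the pattern already established in this section: reduce to the classical smooth Ricci flow via a smoothing of $S^2_{Pol}$, apply the appropriate theorem from the smooth theory, and then transfer the conclusion back using the conformal nature of the flow and the agreement of the smoothing with the polyhedral metric away from a small neighbourhood of the 1-skeleton. The smooth ingredient here is Topping's theorem \cite{To1} for complete noncompact surfaces with bounded curvature satisfying (\ref{eq:noncolapse}), combined with the cusp analysis of Isenberg--Mazzeo--Sesum \cite{IMS1} (and \cite{AAR}) to account for the hyperbolic cusps.

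First I would produce, via (the noncompact version of) Proposition \ref{prop:Waldapprox}, a family $\{\widetilde{S}^2_m\}$ of smoothings that differ from $S^2_{Pol}$ only in the $\frac{1}{m}$-neighbourhood of the 1-skeleton. The boundedness of the combinatorial (equivalently, by Proposition \ref{prop:wald-vs-Vkappa}, Wald--Berestovskii) curvature of $S^2_{Pol}$ translates, via the approximation result and the fact that Wald curvature on a smooth surface coincides with Gauss curvature, into a uniform bound on the Gauss curvature of $\widetilde{S}^2_m$: for $m$ large, the only potential concentration lies in the neighbourhoods of the vertices, where it is controlled by the hypothesised bound on the angular defects. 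Completeness is inherited from $S^2_{Pol}$. The noncollapsing condition (\ref{eq:noncolapse}) transfers to $\widetilde{g}_m$ because Munkres' $\delta$-approximations distort distances and areas by an arbitrarily small multiplicative factor, so the ball volumes $\mathrm{Vol}_{\widetilde{g}_m}\bigl(B_{\widetilde{g}_m}(x,r_0)\bigr)$ stay bounded below by, say, $\varepsilon/2$ uniformly in $m$.

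Having verified the hypotheses of Topping's theorem, I would obtain a unique smooth Ricci flow $\widetilde{g}_m(t)$ on $\widetilde{S}^2_m$ existing for all $t\geq 0$, with Gauss curvature remaining bounded. The smoothing can be arranged so as to coincide exactly with a standard hyperbolic cusp model on some fixed cuspidal neighbourhood of each puncture, and \cite{IMS1} then guarantees that the flow contracts the cusp while preserving the curvature bound. The conformality of the flow (\cite{Ha}, Corollary 5.2), together with the coincidence of $\widetilde{g}_m$ and $g_{Pol}$ outside the $1/m$-neighbourhood of the 1-skeleton, shows that the pullback of $\widetilde{g}_m(t)$ defines a polyhedral metric $g_{Pol}(t)$ satisfying (\ref{eq:RicciComb}) at the vertices; the uniform bounds (\ref{eq:metric-dist}) and (\ref{eq:metric-dist2}) then pass to the polyhedral metric exactly as in the proof of Theorem \ref{thm:flow-rate-conv}. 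Uniqueness is inherited in the same way from the smooth side.

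The main obstacle will be ensuring that the cusp structure is preserved compatibly under smoothing and under the flow, so that what one calls a ``polyhedral cusp'' is unambiguously identified with a standard hyperbolic cusp on the smooth side. This requires choosing a smoothing which is an isometry on a fixed neighbourhood of each puncture, which is possible because the polyhedral metric is assumed to agree with the hyperbolic cusp model there and Munkres' construction only alters the surface near the 1-skeleton. Once that compatibility is secured, the cusp-contraction statement and the curvature bounds of \cite{IMS1} descend verbatim to the polyhedral flow.
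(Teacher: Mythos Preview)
Your approach is essentially the same as the paper's: the proposition is presented there as a ``recap'' of the preceding discussion, with no separate formal proof, and that discussion consists precisely of passing to a smoothing, invoking Topping's theorem \cite{To1} (which supplies existence, uniqueness, bounded curvature, and the cusp-contracting behaviour under the noncollapsing hypothesis), and transferring back via the approximation results already established.

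Two minor remarks. First, the paper attributes the cusp-contracting conclusion directly to \cite{To1} rather than to \cite{IMS1}; your invocation of \cite{IMS1}/\cite{AAR} is not wrong, but it is additional to what the paper actually uses here (the paper mentions those references only in a separate remark concerning convergence rates). Second, your ``main obstacle'' paragraph --- arranging the smoothing to be an isometry on a fixed cuspidal neighbourhood so that the polyhedral and smooth cusp models coincide --- is a legitimate technical point that the paper simply does not address; you are being more careful than the source, not diverging from it.
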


\begin{rem}
Both the boundedness and the noncollapsing conditions evidently hold for surfaces that appear in graphics, hence the fitting result for the combinatorial (and metric) flow also hold for this type of application. It follows, that we can apply also in this context the cusp contracting result of \cite{To1}.
One may, however,  argue that manifolds with (hyperbolic) cusps do not appear in graphics, only compact manifolds (with or without boundary), but in fact many algorithms are modeled upon surfaces with punctures -- see e.g. \cite{Gu-Yau}, \cite{JKG}.
\end{rem}


\section{Embeddability in $\mathbb{R}^3$}

In this section we mainly consider a problem regarding smooth surfaces, and we hope that by now, the connection with its version for polyhedral surfaces is clear. It should be noted, in this context that, by \cite{Mun}, Theorem 8.8, any $\delta$-approximation of an embedding is also an embedding, for small enough $\delta$. Since, as we have already mentioned, smoothing represent $\delta$-approximations, the possibility of using 
results regarding smooth to infer results regarding polyhedral embeddings is proven. (The other direction -- namely from smooth to $PL$ and polyhedral manifolds -- follows from the fact that the {\it secant approximation} (see Appendix) is a $\delta$-approximation if the simplices of the $PL$ approximation satisfy a certain nondegeneracy condition -- see \cite{Mun}, Lemma 9.3.)
We wish to stress here the importance of the embeddability in graphics and image processing. In the only fully implemented Ricci flow, that is the combinatorial flow \cite{Gu-Yau}, \cite{JKG}, the goal is, in fact, to produce, via the circle packing metric, a conformal mapping from the given surface to a surface of constant (Gauss) curvature. Since in the relevant cases  (see \cite{CL}) the surface in question is a planar region (usually a subset of the unit disk), its embeddability (not necessarily isometric) is trivial. Moreover, in the above mentioned works, there is no interest (and indeed, no need) to consider  the (isometric) embeddability of the surfaces $S_t^2$ (see below) for an intermediate time $t \neq 0, T$. However, this aspect is very important if one considers the problem of the Ricci flow for surfaces of piecewise constant curvature; as well as in image processing -- see \cite{ASZ}.

Let $S_0^2$ be a smooth surfaces of positive Gauss curvature, and let $S_t^2$ denote the surface obtained at time $t$ from $S_0^2$ via the Ricci flow. 
For all omitted background material (proofs, further results, etc.) we refer to \cite{HH}.

\begin{prop}
Let $S_0^2$ be the unit sphere $\mathbb{S}^2$, equipped with a smooth metric $g$, such that $K(g) > 0$. Then the surfaces $S^2_t$ are (uniquely, up to a congruence) isometrically embeddable in $\mathbb{R}^3$, for any $t \geq 0$.
\end{prop}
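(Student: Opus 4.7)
The strategy is to reduce the statement to three classical results about smooth surfaces of positive curvature: the preservation of $K>0$ under the two-dimensional (normalized) Ricci flow on the sphere, the Weyl embedding theorem in the Nirenberg--Pogorelov form, and the Cohn-Vossen rigidity theorem for closed convex surfaces in $\mathbb{R}^3$. Since the proposition concerns the smooth Ricci flow, all the machinery developed above for the polyhedral/metric setting is only needed to justify the transfer of the statement to the polyhedral analogue; the smooth case itself reduces to a ``black box'' assembly once the hypotheses are verified at each time $t$.

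First I would verify that the hypothesis $K(g_t)>0$ is preserved along the flow and that the flow is defined for all $t\geq 0$. Under the normalized Ricci flow $\partial_t g=-2(K-\overline{K})g$ on a compact surface, the Gauss curvature evolves by a parabolic equation of the form
\begin{equation*}
\frac{\partial K}{\partial t}=\Delta K+2K(K-\overline{K}).
\end{equation*}
The scalar maximum principle applied to $K_{\min}(t)=\min_{x\in S^2}K(x,t)$ shows that strict positivity of $K$ propagates: whenever $K_{\min}<\overline{K}$ the minimum is nondecreasing, and one cannot lose positivity without contradicting this bound. Combining this with Chow's theorem for the normalized flow on $\mathbb{S}^2$ (in the spirit of Hamilton's earlier results for $\chi>0$), the flow exists smoothly for all $t\in[0,\infty)$ and converges to a round metric. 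In particular each $(S^2_t,g_t)$ is a smooth Riemannian $2$-sphere with $K(g_t)>0$.

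Second, at each fixed $t\geq 0$ I would invoke the Weyl embedding theorem of Nirenberg and Pogorelov: any $C^\infty$ metric on $\mathbb{S}^2$ of strictly positive Gauss curvature admits a global $C^\infty$ isometric embedding into $\mathbb{R}^3$ whose image is a closed convex surface. Applied to $(S^2_t,g_t)$ this produces the desired isometric embedding. Third, for uniqueness, any two isometric smooth closed convex surfaces in $\mathbb{R}^3$ coincide up to a rigid motion by the Cohn-Vossen rigidity theorem, hence the embedding of $(S^2_t,g_t)$ is unique up to a congruence of $\mathbb{R}^3$.

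The essentially only obstacle is the first step, since the Weyl embedding and Cohn-Vossen rigidity apply directly to each $(S^2_t,g_t)$ as soon as smoothness and $K>0$ are secured; everything else is ``off the shelf''. For that step one needs a careful application of the parabolic maximum principle to the evolution equation for $K$, together with the long-time existence result of Hamilton--Chow for the normalized Ricci flow on $\mathbb{S}^2$. If one wished to weaken the hypothesis (for instance to merely nonnegative curvature, or to allow isolated degenerate points), additional care would be required both in propagating strict positivity and in invoking the Nirenberg--Pogorelov result, but under the standing assumption $K(g)>0$ the three ingredients assemble directly into the claim.
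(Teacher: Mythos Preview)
Your proposal is correct and follows the same overall strategy as the paper: show that $K>0$ is preserved along the flow and then apply the Nirenberg--Pogorelov (Weyl) embedding theorem at each time $t$. The paper differs in two minor respects. First, to obtain $K(g_t)>0$ the paper argues via the conformal factor $g_t=e^{2\varphi}g_0$ and the standard formula for Gauss curvature under conformal change, rather than via the maximum principle applied to the evolution equation for $K$; your route is the more standard and more transparent one. Second, the paper asserts uniqueness up to congruence in the statement but does not supply an argument; your explicit invocation of Cohn-Vossen rigidity fills that gap.
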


\begin{proof}
By a by now classical result of Nirenberg \cite{Ni} and Pogorelov \cite{Po} (independently), $S_0^2$ admits a smooth isometric embedding in $\mathbb{R}^3$.

The metric $g_t$ being conformal to $g_0$, for any $t$ (see \cite{Ha}), we can write it as 
$g_t = e^{2\varphi}g_0$, for some smooth function $\varphi$.\footnote{This represents, in fact, the first,  basic step in the proof of the Niremberg-Pogorelov theorem.} Then, by elementary computations (see, e.g. \cite{HH}), we obtain that the Gauss curvature of this metric is
\[
K_t = K(g_t) = e^{-2\varphi}(K_{g_0} - t\Delta_{g_0}\varphi) = 
te^{2(1-t)\varphi}K_g + (1 - t)e^{-2t\varphi} > 0\,.
\]
Therefore, again by Nirenberg and Pogorelov's  theorem, the surfaces $S^2_t$ are isometrically embeddable in $\mathbb{R}^3$, for any $t \geq 0$.
\end{proof}

We can, in fact, do somewhat better:

\begin{cor}
Let $S^2_0$ be a smooth surface. If $\chi(S_0^2) > 0$, then there exists some $t_0 \geq 0$, such that the surfaces $S^2_t$ are isometrically embeddable in $\mathbb{R}^3$, for any $t \geq t_0$.
\end{cor}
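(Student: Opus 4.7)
The plan is to reduce the corollary to the preceding proposition by first flowing long enough that the Gauss curvature becomes strictly positive everywhere. Since $\chi(S_0^2) > 0$ and $S_0^2$ is a closed smooth surface, it is homeomorphic to $\mathbb{S}^2$. The first step is to invoke the Hamilton--Chow convergence theorem for the (normalized) Ricci flow on $\mathbb{S}^2$ (cf.\ \cite{Ha} and Chow \cite{Ch}): for any smooth initial metric $g_0$, the flow exists for all $t \geq 0$ and $g_t$ converges exponentially in $C^{\infty}$ to a metric $g_\infty$ of constant positive curvature $K_\infty > 0$.

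From this uniform convergence I would extract a time $t_0 \geq 0$ such that $K(g_t) > 0$ everywhere on $S^2_t$ for every $t \geq t_0$. This is immediate, because $K(g_t) \to K_\infty > 0$ uniformly on the compact surface, so a single application of the $C^0$ part of the convergence statement suffices.

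Next I would apply the preceding proposition with $(S^2_{t_0}, g_{t_0})$ playing the role of the initial datum: it is a smooth metric on a topological $\mathbb{S}^2$ with positive Gauss curvature everywhere. The conformal character of the surface Ricci flow (\cite{Ha}, Corollary 5.2) ensures that $g_{t_0+s}$ remains conformal to $g_{t_0}$ for every $s \geq 0$, so exactly the same argument used in the previous proof -- write $g_{t_0+s} = e^{2\varphi}g_{t_0}$, compute $K(g_{t_0+s})$ from the conformal formula, and check its positivity -- together with the Nirenberg--Pogorelov theorem yields isometric embeddings $S^2_t \hookrightarrow \mathbb{R}^3$ for every $t \geq t_0$.

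The main obstacle, and the only genuinely new input, is producing the time $t_0$ at which the Gauss curvature becomes positive everywhere. Hamilton's original theorem preserves an initial pointwise positivity hypothesis but does not create positivity from an indefinite-sign $K(g_0)$; this gap is precisely what Chow's theorem bridges on $\mathbb{S}^2$, and it is the essential ingredient without which the corollary does not follow from the previous proposition. A secondary technical point is that, for $\chi > 0$, the unnormalized flow develops a singularity in finite time; one should either work throughout with the normalized flow and rescale back, or restrict the argument to the maximal existence interval $[t_0,T)$, neither of which affects the question of isometric embeddability in $\mathbb{R}^3$.
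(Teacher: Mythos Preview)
Your proposal is correct and follows essentially the same approach as the paper: wait until the curvature becomes strictly positive, then invoke the previous proposition (i.e.\ Nirenberg--Pogorelov). The paper's own proof is extremely terse---it simply appeals to ``the continuity of the Gauss curvature during the Ricci flow'' to produce $t_0$ and then refers back to the preceding argument---whereas you are more explicit in naming the Hamilton--Chow convergence theorem as the mechanism that forces $K(g_t)>0$ eventually, and in flagging the normalized-versus-unnormalized issue; but the underlying strategy is identical.
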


\begin{proof}
By the continuity of the Gauss curvature during the Ricci flow, it follows that, as some time $t_0$, the $|K_{t_0}| \leq K_0 > 0$. Applying again 
the arguments in 
the proof above, the corollary follows.
\end{proof}

\begin{rem}
In this context it is impossible not to mention Alexandrov's results \cite{Al} regarding convex surfaces in $\mathbb{R}^3$: (A) Any convex surface, endowed with its intrinsic metric, is a manifold of nonnegative curvature; and, in essentially the opposite direction, (B) Any complete metric of positive curvature (nonnegative) on the sphere (the plane)  represents the metric of a (not necessarily smooth) convex surface.

However, we should underline that requiring that a certain polyhedral sphere actually has (strictly) positive curvature at all its vertices it is quite a strong condition; indeed, it is a well known fact in graphics (see, e.g. \cite{LSE}) that even the most standard polygonal approximations of the sphere, exhibit, even at and high resolution, saddle points at certain vertices.
\end{rem}

In contrast with this positive result, the for (complete) surfaces uniformized by the Hyperbolic plane we have the following negative result:

\begin{prop}  \label{prop:non-embedd-hyp}
Let $(S^2_0,g_0)$ be a smooth surface. If $\chi(S^2) < 0$, then there exists some $t_0 \geq 0$, such that the surfaces $S^2_t$ are not isometrically embeddable in $\mathbb{R}^3$, for any $t \geq t_0$.
\end{prop}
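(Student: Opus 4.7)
The plan is to combine the exponential convergence of curvature during the normalized Ricci flow, as established by Theorem \ref{thm:flow-rate-conv}, with the classical non-embeddability results for surfaces of strictly negative Gauss curvature.

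First I would exploit the fact that, since $\chi(S_0^2) < 0$, the average curvature $K$ driving the normalized flow (\ref{eq:RicciFlow+}) is strictly negative, and so by Theorem \ref{thm:flow-rate-conv}(1) -- specifically the estimate (\ref{eq:curv}) -- the running curvature satisfies
\[
K - C'e^{Kt} \leq K(t) \leq K + C'e^{Kt}\,.
\]
Since $K < 0$, both sides converge exponentially to $K$. Thus one can choose $t_0 \geq 0$ large enough that $K(t) \leq K/2 < 0$ holds uniformly on $S_t^2$ for every $t \geq t_0$. This uniform negative upper bound is the essential geometric input for the non-embeddability argument.

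Next I would invoke the classical non-embeddability of surfaces of (uniformly) negative curvature in $\mathbb{R}^3$, splitting into two cases. In the compact case (closed surface of genus $\geq 2$), the argument is elementary: any closed smooth surface isometrically embedded in $\mathbb{R}^3$ possesses a point of strictly positive Gauss curvature, namely a point of maximal distance from any fixed point of $\mathbb{R}^3$, where the second fundamental form is sign-definite and hence $K > 0$; this directly contradicts the uniform bound $K(t) \leq K/2 < 0$. In the complete non-compact case, one appeals instead to Efimov's theorem (the strengthening of Hilbert's classical theorem on the pseudosphere), which asserts that no complete surface whose Gauss curvature is bounded above by a strictly negative constant admits a $C^2$ isometric immersion into $\mathbb{R}^3$. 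Either way, $S_t^2$ fails to be isometrically embeddable in $\mathbb{R}^3$ for every $t \geq t_0$.

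The main obstacle is producing the genuinely uniform negative upper bound on $K(t)$ on all of $S_t^2$ simultaneously, but this is delivered essentially for free by the pointwise exponential estimates of Theorem \ref{thm:flow-rate-conv}. A secondary subtlety, relevant only in the complete non-compact setting, is verifying that completeness (and a two-sided curvature bound, required for Efimov's theorem to apply in its standard form) is preserved throughout the flow; this follows from Shi's existence theorem together with the uniform metric equivalence (\ref{eq:metric-dist}) and the distance comparison (\ref{eq:metric-dist2}) already used in the proof of Theorem \ref{thm:flow-rate-conv}.
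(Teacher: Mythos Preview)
Your proof is correct and follows essentially the same strategy as the paper: obtain a uniform negative upper bound on $K(t)$ for all $t\ge t_0$, then invoke Efimov's theorem. The paper argues the first step more tersely via ``continuity of the Gauss curvature during the Ricci flow'' rather than citing the explicit exponential bound (\ref{eq:curv}), and it does not separate out the compact case; your additional elementary observation that a closed surface embedded in $\mathbb{R}^3$ must have a point of positive curvature is a nice shortcut that bypasses Efimov entirely in the compact setting, but the overall architecture is the same.
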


\begin{proof}
Since at time $T$, the surface undergoing the flow has constant negative Gauss curvature, it is not smoothly ($\mathcal{C}^4$) embeddable in $\mathbb{R}^3$, by a classical theorem of Hilbert \cite{Hi}. By the continuity of the Gauss curvature during the Ricci flow, it follows that, as some time $t_0$, $K_{t_0} \leq K_0 < 0$. Therefore, by a result of Efimov \cite{Ef}, it follows that $S_{t_0}$ admits no smooth ($\mathcal{C}^2$) isometric immersion (hence embedding) in $\mathbb{R}^3$.
\end{proof}

\begin{rem}
Efimov \cite{Ef1} also proved that even if only the gradient of the Gaussian curvature is bounded (by some specific constant -- see \cite{Ef1}, \cite{HH}), there exists no smooth ($\mathcal{C}^3$) isometric immersion in $\mathbb{R}^3$, and no $\mathcal{C}^2$ isometric immersion exists if the suprema of $K$ and of its gradient are $< \infty$.
On the other hand, Hong \cite{Ho} showed that smooth isometric immersions in $\mathbb{R}^3$ exist if the decay rate at infinity of $K$ is slower that the inverse square of geodesic distance.

Obviously, the above mentioned results render appropriate versions of Proposition \ref{prop:non-embedd-hyp}. However, their applicabilty is of far lesser interest in the context of the Ricci flow for polyhedral surfaces, so we do not formulate them explicitly.
\end{rem}

\begin{rem} \label{rem:last}
In fact, the the situation is far worse, so to speak, than even Hilbert's and Efimov's theorems might suggest. Indeed, no general existence/nonexistence result is available, 
even if one restricts himself to local isometric embedding. Without going in too many details (since this would bring us out of our scope), it is known that such embeddings are possible if $K$ does not vanish; or when $K(p) = 0$ and $dK(p) \neq  0$ or if $dK(p) \geq  0$ in some neighbourhood of a point $p$; and again when $K(p) = 0, dK(p) = 0$ and ${\rm Hess}K(p) < 0$. (For further details and bibliographical references see e.g. \cite{HH}.)

However, as in the global case, (for lower differentiability classes), no general results are even possible. This was first shown by
 Pogorelov \cite{Pog1}, who constructed a $\mathcal{C}^{2,1}$ metric on the unit disk $\mathbb{B}^2 = \mathbb{B}^2(0,1) \subset \mathbb{R}^2$, such that there exists no $\mathcal{C}^{2}$ isometric imbedding in $\mathbb{R}^3$ of $\mathbb{B}^2(0,r)$, for any $0 < r < 1$.
%

We mention in this context that a criterion for the local isometric embedding of polyhedral surfaces in $\mathbb{R}^3$, akin to the classical Gauss fundamental (compatibility) equation in the classical differential geometry of surfaces, was given in \cite{Sa}. Namely, given a vertex $v$, with metric curvature $K_W(v)$, the following system of inequalities should hold:
\begin{equation} \label{eq:metricGauss}
 \left\{
         \begin{array}{lll}
         \max{A_0(v)} \leq 2\pi;\\
         \alpha_0(v;v_j,v_l) \leq \alpha_0(v;v_j,v_p) + \alpha_0(v;v_l,v_p), & {\rm for\; all\;} v_j,v_l,v_p \sim v;\\ 
         V_\kappa(v) \leq 2\pi;
         \end{array}
 \right.
\end{equation}
Here
\begin{equation}
A_0 = \max_i{V_0}\,;
\end{equation}
 ``$\sim$'' denotes incidence, i.e. the existence of a 
connecting edge $e_{i} = vv_j$ and, of course, $V_\kappa(v) = \alpha_\kappa(v;v_j,v_l) + \alpha_\kappa(v;v_j,v_p) + \alpha_\kappa(v;v_l,v_p)$, where $v_j,v_l,v_p \sim v$, etc.

Note that the first two inequalities represent the (extrinsic) embedding condition, while the third one represents the intrinsic curvature (of the $PL$ manifold) at the vertex $v$.

For details and a fitting global embedding criterion see \cite{Sa}.

We mentioned these facts because, beyond the ``archival'' interest (so to speak) in such results, they are relevant if locally embedding based applications are envisioned (even though, as we have previously shown, the basic smoothing and approximation result holds in this case as well).
\end{rem}
%
%


\section*{Acknowledgement}
The author would like to thank David Xianfeng Gu for his keen interest, his stimulating questions and for 
guiding him through the implementation aspects of the combinatorial Ricci flow.


\section*{Appendix}

We include here the modicum of differential topology needed (mainly in the alternative proof of Proposition \ref{prop:BKapprox}). Our source for this material is \cite{Mun}. We presume that the reader is familiar with the basic concepts (simplicial complexes, triangulations, etc.) however, as a background text, we warmly recommend Munkres' notes \cite{Mun}.

\begin{defn}
\begin{enumerate}
\item Let $f:K \rightarrow \mathbb{R}^n$ be a $\mathcal{C}^r$ map, and let $\delta:K \rightarrow \mathbb{R}^*_+$ be a continuous function. Then $g:|K| \rightarrow \mathbb{R}^n$ is called a $\delta${\em-approximation to} $f$ iff:\\
(i) There exists a subdivision $K'$ of $K$ such that $g \in
\mathcal{C}^r(K',\mathbb{R}^n)$\,;\\
(ii) $d_{eucl}\big(f(x),g(x)\big) < \delta(x)$\,, for any $x \in |K|$\,;\\
(iii) $d_{eucl}\big(df_a(x),dg_a(x)\big) \leq \delta(a)\cdot
d_{eucl}(x,a)$\,, for any $a \in |K|$ and for all $x \in \overline{St}(a,K')$.
\item Let $K'$ be a subdivision of $K$, $U = \raisebox{0.05cm}{\mbox{$\stackrel{\circ}{U}$}}$, and let $f \in
\mathcal{C}^r(K,\mathbb{R}^n), \; g  \in \mathcal{C}^r(K',\mathbb{R}^n)$.  g is called a
$\delta${\em-approximation} of $f$ (on $U$) iff conditions (ii) and (iii) above hold for any $a \in
U$.

(Here $St(a,K)$ denotes, as it standardly does, the {\it star} of $a$ (in $K$), i.e. $St(a,K) = \bigcup_{a \in \sigma, \sigma \in K}\sigma$.
\end{enumerate}
\end{defn}

Recall that in the $PL$ context the differential (of a map) is defined as follows:

\begin{defn}
Let $\sigma$ be a simplex, and let $f:\sigma \rightarrow \mathbb{R}^n,\; f \in \mathcal{C}^r$. If $a \in \sigma$
we define $df_a:\sigma \rightarrow \mathbb{R}^n$ as follows: $df_a(x) = Df(a)\cdot (x - a)$, where $Df(a)$ denotes
the Jacobian matrix $Df(a) = (\partial f_i/\partial x^j)_{1 \leq i,j \leq n}$, computed with respect to some
orthogonal coordinate system contained in $\Pi(\sigma)$, where $\Pi(\sigma)$ is the hyperplane determined by
$\sigma$. The map $df_a:\sigma \rightarrow \mathbb{R}^n$ does not depend upon the choice of this coordinate
system.


Moreover, $df_a|_{\sigma \cap \tau}$ is well defined, for any $\sigma, \tau \in \overline{St}(a,K)$.
Therefore the map $df_a:\overline{St}(a,K) \rightarrow \mathbb{R}^n$ is well-defined and continuous, and it is
called -- analogous to the case of differentiable manifolds, the differential of $f$.
\end{defn}

%
%
%

\begin{defn}
Let $K'$ be a subdivision of $K$ and let $f \in \mathcal{C}^r(K,\mathbb{R}^n), \; g \in
\mathcal{C}^r(K',\mathbb{R}^n)$ be non-degenerate mappings (i.e. $rank(f|_{\sigma}) = rank(g|_{\sigma}) =
dim\,\sigma$,  for any $\sigma \in K$) and let $U = \raisebox{0.05cm}{\mbox{$\stackrel{\circ}{U}$}} \subset |K|$.
The mapping $g$ is called an $\alpha${\it-approximation} (of $f$ on $U$) iff:
\begin{equation}
\angle\big(df_a(x),dg_a(x)\big) \leq \alpha\,; \;{\rm for\; any}\; a \in U,\; {\rm and \; any}\; x \in
\overline{St}(a,K'),\; a \neq x.
\end{equation}
\end{defn}

As expected, a fine enough $\delta$-approximation is also an $\alpha$-approximation:

\begin{lem}[\cite{Mun}, Lemma 8.7]
Let $K$ be a (finite) simplicial complex and let $f:K \rightarrow \mathbb{R}^n$ be a non-degenerate $\mathcal{C}^r, 1 \leq r \leq \infty$ map.
Then, for any $\alpha > 0$, there exists $\delta = \delta(\alpha) > 0$ such that any non-degenerate $\mathcal{C}^r$ map $g:K' \rightarrow \mathbb{R}^n$, , which is a $\delta$-approximation of $f$ on some open set $U$, is also an $\alpha$-approximation of $f$ on $U$. (Here $K'$ denotes, as above, a subdivision of $K$.)
\end{lem}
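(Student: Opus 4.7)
The plan is to convert the \emph{additive} pointwise closeness of the differentials, supplied by condition (iii) of the $\delta$-approximation definition, into a \emph{relative} (multiplicative) closeness, and then to invoke an elementary Euclidean fact stating that two vectors which are close in the multiplicative sense must make a small angle. In short: the non-degeneracy of $f$ gives a uniform lower bound on $\|df_a(x)\|$ in terms of $\|x-a\|$, and dividing the $\delta$-bound by this lower bound produces the required relative estimate.

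First I would record exactly what (iii) supplies: for every $a \in U$ and every $x \in \overline{St}(a,K')$,
\[
\|df_a(x) - dg_a(x)\| \leq \delta \cdot d_{eucl}(x,a).
\]
Next I would exploit non-degeneracy of $f$. Because $df_a(x) = Df(a)(x-a)$ is linear in $x-a$ along each simplex $\sigma \ni a$, non-degeneracy means that the restriction of $Df(a)$ to $\Pi(\sigma) - a$ is an injective linear map, and its smallest singular value is therefore a strictly positive quantity $c_\sigma(a)$. I would then argue that
\[
c \;=\; \inf_{a \in |K|}\;\inf_{\sigma \ni a}\; c_\sigma(a) \;>\; 0,
\]
by combining (a) positivity of each $c_\sigma(a)$, (b) continuity of $Df(a)$ in $a$ within each simplex (since $f \in \mathcal{C}^r$, $r \geq 1$), and (c) finiteness of $K$, which reduces the outer infima to a finite union of compact sets on which a continuous positive function attains a positive minimum.

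With such a $c > 0$ in hand, (iii) rewrites as $\|df_a(x) - dg_a(x)\| \leq (\delta/c)\,\|df_a(x)\|$ for every $a \in U$ and every $x \in \overline{St}(a,K')$ with $x \neq a$. I would then invoke the elementary fact: if $u,v \in \mathbb{R}^n$ satisfy $\|u - v\| \leq \varepsilon \|u\|$ with $\varepsilon < 1$, then $\sin\angle(u,v) \leq \varepsilon/(1-\varepsilon)$, so the angle tends to zero with $\varepsilon$. Given $\alpha$, choose $\varepsilon = \varepsilon(\alpha) > 0$ so that this bound produces an angle at most $\alpha$, and set $\delta(\alpha) := \varepsilon \, c$. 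Any non-degenerate $\mathcal{C}^r$ map $g$ which is a $\delta(\alpha)$-approximation of $f$ on $U$ then automatically satisfies $\angle(df_a(x), dg_a(x)) \leq \alpha$ throughout $U$, which is exactly the definition of an $\alpha$-approximation.

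The hard part will be the uniform lower bound step: $Df(a)$ and the stars $\overline{St}(a,K)$ both vary with $a$, so one cannot cite a single invertible linear map. The remedy is to work simplex by simplex and exploit the finiteness of $K$, which is why the hypothesis that $K$ be finite is essential. A small subsidiary point: $K'$ refines $K$, so new vertices of $K'$ may lie in the relative interior of simplices of $K$; but then $\overline{St}(a,K')$ is contained in the simplex of $K$ whose interior contains $a$, so the same lower bound applies without modification. The elementary angle estimate itself I would treat as folklore rather than grind out.
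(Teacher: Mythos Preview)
The paper does not actually supply a proof of this lemma: it merely states it, with the attribution \cite{Mun}, Lemma 8.7, and immediately moves on to the definition of the secant map. There is therefore nothing in the paper to compare your argument against.

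That said, your sketch is correct and is essentially the standard argument (and indeed the one Munkres gives). The key steps are exactly those you identify: non-degeneracy together with finiteness and compactness of $K$ yields a uniform lower bound $\|df_a(x)\| \geq c\,d_{eucl}(x,a)$ with $c>0$ independent of $a$ and $\sigma$; dividing condition~(iii) by this bound converts the additive estimate into a multiplicative one; and an elementary inequality then bounds the angle. Your treatment of the subdivision issue (points $a$ lying in the relative interior of a simplex of $K$) is also correct. One cosmetic remark: the angle bound can be sharpened to $\sin\angle(u,v)\leq\varepsilon$ directly, since the component of $u$ orthogonal to $v$ coincides with the component of $u-v$ orthogonal to $v$, but your weaker estimate $\varepsilon/(1-\varepsilon)$ is of course sufficient.
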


We conclude the appendix with the following definition:

\begin{defn}\label{s:Secant}
Let $f \in \mathcal{C}^r(K)$ and let $s$ be a simplex, $s < \sigma \in K$. Then the linear map: $L_s:s \rightarrow
\mathbb{R}^n$, defined by $L_s(v) = f(v)$ where $v$ is a vertex of $s$, is called the {\em secant map induced by}
$f$.
\end{defn}



\end{document}